\definecolor{red}{rgb}{1,0.1,0.1}
\definecolor{blue}{rgb}{0.1,0.1,1}
\definecolor{vb}{RGB}{160,32,240}
\numberwithin{equation}{section}
\newtheorem{theorem}{Theorem}[section]
\newtheorem{definition}[theorem]{Definition}
\newtheorem{lemma}[theorem]{Lemma}
\newtheorem{remark}[theorem]{Remark}
\newcommand{\NN}{\mathbb{N}}
\newcommand{\RR}{\mathbb{R}}
\title{Existence of positive solutions for a semipositone $p(\cdot)$-Laplacian problem}
\author[L. A. Vallejos]{Lucas A. Vallejos}
\address{L. A. Vallejos\\
FaMAF \\
Universidad Nacional de C\'ordoba \\
CIEM (CONICET) \\
5000 C\'ordoba, Argentina}
\email{lucas.vallejos@unc.edu.ar}
\author[R.~E.~Vidal]{Ra\'ul E. Vidal}
\address{R.~E.~Vidal \\
FaMAF \\
Universidad Nacional de C\'ordoba \\
CIEM (CONICET) \\
5000 C\'ordoba, Argentina}
\email{raul.vidal@unc.edu.ar}
\thanks{ The authors are partially supported by CONICET and SECYT-UNC}
\keywords{Semipositone problem, Positive solutions, Mountain pass theorem, $p(\cdot)$-Laplacian.%  maximum principles
\\
\indent2020 {\ Mathematics Subject Classification: 35A15, 35J62, 46E30 }}
\begin{document}

\begin{abstract}
In this paper we find a positive weak solution for a semipositone $p(\cdot )$-
Laplacian problem. More precisely, we find a solution for the problem 
\[
\left\{ 
\begin{array}{cc}
-\Delta _{p(\cdot )}u=f(u)-\lambda  & \text{in }\Omega  \\ 
u>0 & \text{in }\Omega  \\ 
u=0 & \text{on }\partial \Omega 
\end{array}%
\right. ,
\]
 where $\Omega \subset \mathbb{R}^{N}$, $N\geq 2$ is a smooth bounded
domain, $f$ is a contiuous function with subcritical growth, $\lambda >0$
and $\Delta _{p(\cdot )}u=\text{div}(\left\vert \nabla u\right\vert
^{p(\cdot )-2}\nabla u)$. Also, we assume an Ambrosetti-Rabinowitz type of
condition and using the Mountain Pass arguments, comparision principles and regularity principles we prove the existence of positive weak solution for $\lambda $ small enough.
\end{abstract}

\maketitle

\section{Introduction}

The study of variational problems with nonstandard growth conditions is an interesting topic in recent years, even today. $p(x)$-growth conditions can be regarded as an important case of nonstandard $(p,q)$-growth conditions. Some of the results obtained on this kind of problems are for example \cite{F2}, \cite{FZ1}, \cite{FZ2}, \cite{FZ3}, \cite{M}, \cite{Zh1} and \cite{Zh2}. \newline Under $(p,q)$-growth conditions Marcellini \cite{M} proved the existence and regularity of weak solutions of elliptic equations of divergence form with differentiable coefficients in the nondegenerate case. 

Under $p(x)$-growth conditions, Fan and Zhao \cite{FZ1}, \cite{FZ2} and \cite{FZ3} proved that the weak solutions of elliptic equations are H\"{o}lder continuous and its gradients have high
integrability. %\newline

An interesting question is the existence of positive solution for a semipositone $p$-Laplacian and $\phi$-Laplacian problems. For example in \cite{ChS}, \cite{CCSU}, \cite{DGU}, \cite{CdeFL} the authors studied the case of the $p$-Laplacian problem and in \cite{HLS}, the $\phi$-Laplacian problem. In \cite{AdHS} the authors proved the existence of positive weak solution for the semipositone problem through Orlicz-Sobolev spaces. Furthermore, more recent works can be seen at \cite{RF}, \cite{SAM}, \cite{RST}, etc.   

An interesting problem is the existence of weak positive solutions for a semipositone $p(x)$-Laplacian problem. 

In the present work we want find positive weak solutions to the problem

\begin{equation}\label{0.1}
	\left\{ 
	\begin{array}{rclc}
		-\Delta_{p(\cdot)} (u)& = &  f(u)-\lambda,\qquad & \Omega,\\
u&>&0, &\Omega,  \\ 
  u & = & 0, & \partial \Omega .
	\end{array}%
	\right.,  
\end{equation}
where $\Omega\subset \mathbb{R}^N$, $N>2$ is a smooth bounded domain with
smooth boundary denoted by $\partial \Omega $, $f:[0,+\infty )\rightarrow 
[0,+\infty )$ is a continuous function with subcritical growth, $\lambda>0$ and $\Delta _{p(\cdot )}u=\text{div}(\left\vert \nabla u\right\vert ^{p(\cdot
)-2}\nabla u)$ is the $p(\cdot )$-Laplacian operator with $2 \leq p(x)<N$ for all $x\in \Omega$. 

\bigskip 

Related to the function $f$, we assume  the following conditions:

\begin{itemize}
\item[$(f1)$] $0=f(0)=\min_{t\in \lbrack 0,\infty )}f(t)$;

\item[$(f2)$] $\lim_{t\rightarrow 0^{+}}\frac{f(t)}{t^{r(x)-1}}=0$, with $r(\cdot )\in (p(\cdot ),p^{\ast }(\cdot ))$ and $r^{-}> p^{+}$; %$r(\cdot )\in (p(\cdot),p^{\ast }(\cdot ))$ and $r^{-}>p^{+}$;
 
\item[$(f3)$] There exists $q(\cdot )\in (p(\cdot ),p^{\ast }(\cdot ))$, $q^{-}> p^{+}$
such that%
\[
\limsup_{ t \rightarrow \infty }\frac{
f(t) }{ t ^{q(x)-1}}<\infty ;
\]

\item[$(f4)$] \textit{Ambrosetti-Rabinowitz type condition}: There are $\theta >p_{+}$ and $t_{0}>0$ such
that%
\[
\theta F(t)\leq f(t)t\text{, \ }\forall t\geq t_{0}\text{,}
\]
where $F(t)=\int_{0}^{t}f(s)\,ds$.
\end{itemize}

\begin{definition}
    A weak solutions of problem \eqref{0.1} is a positive function $u\in W^{1,p(\cdot)}_0(\Omega)$ such that
    $$
    \int_{\Omega }\left\vert
\nabla u\right\vert ^{p(x)-2}\left\langle \nabla u,\nabla v\right\rangle
dx-\int_{\Omega }f(u)v\,dx+\lambda\int_{\Omega }v\,dx=0 
    $$
    for every $v\in W_0^{1,p(\cdot)}(\Omega)$.
\end{definition}

The aim of this paper is to prove the following result.

%\begin{theorem}\label{principalTheorem}
%   Let $\Omega\subset \mathbb{R}^N$, $N>2$, be a smooth bounded domain with smooth boundary. Let $p(\cdot) \in \mathcal{P}(\Omega)$ such that $2 \leq p(x) < N$ for all $x \in \Omega$. If $p(\cdot)\in C^{0,\gamma}$, and the conditions $(f1), (f2), (f3)$ and $(f4)$ are assumed, then the $p(\cdot)$-Laplacian problem (\ref{0.1}) has a weak positive solution $u \in W_{0}^{1,p(\cdot)}$. 
%\end{theorem}

\begin{theorem}\label{principalTheorem}
Assume $(f1)$-$(f4)$ and $p(\cdot)\in C^{0,\gamma}(\overline{\Omega})$. Then, there exists $\lambda_1>0$  such that if $\lambda\in(0,\lambda_1)$,  the $p(\cdot)$-Laplacian problem (\ref{0.1}) has a weak positive solution $u \in  C^{1,\alpha}(\overline{\Omega})$, for some $\alpha\in(0,1)$. 
\end{theorem}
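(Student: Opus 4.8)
The plan is to obtain a solution of a suitably modified problem by the Mountain Pass Theorem and then to recover a genuine positive solution of \eqref{0.1} for small $\lambda$ by a limiting argument, using the comparison and regularity results referred to in the statement. Extend $f$ by $f(t)=0$ for $t\le 0$ (consistent with $(f1)$) and, on $X:=W_0^{1,p(\cdot)}(\Omega)$, consider
\[
J_\lambda(u)=\int_\Omega\frac{1}{p(x)}|\nabla u|^{p(x)}\,dx-\int_\Omega F(u^{+})\,dx+\lambda\int_\Omega u^{+}\,dx ,
\]
whose critical points are weak solutions of $-\Delta_{p(\cdot)}u=f(u^{+})-\lambda\,\chi_{\{u>0\}}$. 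Using $u^{+}$ in the lower-order terms is the key device: it keeps the $\lambda$-contribution nonnegative, which is what restores the Mountain Pass geometry at the origin that the true semipositone term $f(u)-\lambda$ destroys; and testing the equation with the negative part $u^{-}$ forces $\int_{\{u<0\}}|\nabla u|^{p(x)}\,dx=0$, hence $u\ge 0$, so that on $\{u>0\}$ the equation reduces to the original one, $-\Delta_{p(\cdot)}u=f(u)-\lambda$.

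I would first verify the Mountain Pass geometry. Near the origin, $(f2)$--$(f3)$ give $F(s)\le\varepsilon\,s^{r(x)}+C_\varepsilon s^{q(x)}$; since $\lambda\int u^{+}\ge 0$, the modular--norm inequalities and the compact embeddings $X\hookrightarrow L^{r(\cdot)},L^{q(\cdot)}$ (subcritical by $(f2)$--$(f3)$) yield $J_\lambda(u)\ge \tfrac{1}{p^{+}}\|u\|^{p^{+}}-C\|u\|^{r^{-}}-C\|u\|^{q^{-}}$ for small $\|u\|$, and as $r^{-},q^{-}>p^{+}$ this gives $J_\lambda(u)\ge\beta>0$ on a sphere $\|u\|=\rho$, with $\beta,\rho$ independent of small $\lambda$. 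For a fixed $\phi_0\in X$ with $\phi_0>0$, the Ambrosetti--Rabinowitz condition $(f4)$ (so that $F(s)\gtrsim s^{\theta}$ with $\theta>p^{+}$ for large $s$) makes $J_\lambda(t\phi_0)\to-\infty$, uniformly for small $\lambda$. Condition $(f4)$ also yields boundedness of Palais--Smale sequences, and the compact subcritical embeddings then give the Palais--Smale condition. The Mountain Pass Theorem thus produces a critical point $u_\lambda$ at a level $c_\lambda\ge\beta>0$, so $u_\lambda\not\equiv0$, and by the remark above $u_\lambda\ge0$.

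The remaining and most delicate point is strict positivity in all of $\Omega$. I would keep the mountain pass level uniformly bounded, $\beta\le c_\lambda\le C$, which bounds $\{u_\lambda\}$ in $X$; an $L^\infty$ bound (Moser/De Giorgi iteration under the subcritical growth) together with the global $C^{1,\alpha}$ regularity for the $p(\cdot)$-Laplacian --- available precisely because $p(\cdot)\in C^{0,\gamma}(\overline\Omega)$ --- gives a uniform bound $\|u_\lambda\|_{C^{1,\alpha}(\overline\Omega)}\le C$. Letting $\lambda\to0^{+}$ along a subsequence, $u_\lambda\to u_0$ in $C^{1}(\overline\Omega)$, where $u_0\ge0$ solves $-\Delta_{p(\cdot)}u_0=f(u_0)$; since $c_\lambda\ge\beta>0$ prevents $u_\lambda\to0$, we have $u_0\not\equiv0$, and the strong maximum principle and Hopf lemma for the $p(\cdot)$-Laplacian (here the comparison principles enter) give $u_0>0$ in $\Omega$ with negative outer normal derivative on $\partial\Omega$. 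By $C^{1}$-closeness, $u_\lambda>0$ in $\Omega$ for all $\lambda\in(0,\lambda_1)$ with $\lambda_1$ small, whence $\{u_\lambda>0\}=\Omega$ and $u_\lambda$ is a positive weak solution of \eqref{0.1} lying in $C^{1,\alpha}(\overline\Omega)$. The main obstacle is exactly this last stage: securing the uniform $C^{1,\alpha}$ estimates needed to pass to the limit and to transfer the strict positivity of $u_0$ back to $u_\lambda$ for all sufficiently small $\lambda$.
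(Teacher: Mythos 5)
Your overall strategy --- modify the nonlinearity, produce a mountain pass solution with bounds uniform in $\lambda$, obtain uniform $W_0^{1,p(\cdot)}$, $L^\infty$ and $C^{1,\alpha}$ estimates, pass to the limit $\lambda\to 0^+$ in $C^{1}$, and use the strong maximum principle plus Hopf boundary behavior to transfer strict positivity back to small $\lambda$ --- is exactly the paper's strategy. The genuine gap is in your choice of modification. The functional
\[
J_\lambda(u)=\int_\Omega\frac{|\nabla u|^{p(x)}}{p(x)}\,dx-\int_\Omega F(u^{+})\,dx+\lambda\int_\Omega u^{+}\,dx
\]
is \emph{not} $C^1$ on $W_0^{1,p(\cdot)}(\Omega)$: the term $u\mapsto\int_\Omega F(u^+)\,dx$ is fine (because $f(0)=0$), but $u\mapsto\int_\Omega u^{+}\,dx$ is only Lipschitz. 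Its one-sided G\^ateaux derivatives at $u$ in direction $v$ are $\int_{\{u>0\}}v\,dx+\int_{\{u=0\}}v^{+}\,dx$ and $\int_{\{u>0\}}v\,dx-\int_{\{u=0\}}v^{-}\,dx$, which differ whenever $\int_{\{u=0\}}|v|\,dx>0$; in particular $J_\lambda$ fails to be differentiable at $u=0$, the very center of your mountain pass geometry. Consequently the (smooth) Mountain Pass Theorem, the notion of Palais--Smale sequence, and the Euler--Lagrange equation $-\Delta_{p(\cdot)}u=f(u^{+})-\lambda\chi_{\{u>0\}}$ are all unjustified as stated --- and your key device, testing with $u^{-}$ to force $u\ge 0$, rests entirely on that equation. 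To repair this you would need either nonsmooth critical point theory for locally Lipschitz functionals (the route of the cited work of Santos--Alves--Massa), or a continuous interpolation of the jump, which is precisely what the paper does: it defines $f_\lambda(t)=f(t)-\lambda$ for $t>0$, $f_\lambda(t)=-\lambda(t+1)$ on $[-1,0]$, and $f_\lambda(t)=0$ for $t<-1$, so that $E_\lambda$ is genuinely $C^1$. The price of that smooth truncation is that critical points are no longer automatically nonnegative: testing with $u^{-}$ only yields $\int_{\{u<0\}}|\nabla u|^{p(x)}\,dx=\lambda\int_{\{-1<u<0\}}|u|(u+1)\,dx\ge 0$, which is no contradiction. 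This is exactly why the paper must prove nonnegativity of the limit by comparison (Theorem \ref{com}) with the solutions $v_j$ of $-\Delta_{p(\cdot)}v_j=-\lambda_j$, $v_j=0$ on $\partial\Omega$.

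Two further points. First, you extract a single subsequence $\lambda\to0^{+}$ and then conclude positivity ``for all $\lambda\in(0,\lambda_1)$ with $\lambda_1$ small''; convergence along one subsequence does not control all small $\lambda$. The argument must be organized as in the paper: assume, for contradiction, that there is \emph{some} sequence $\lambda_j\to 0$ with $m(\{x\in\Omega: u_{\lambda_j}(x)\le0\})>0$ for every $j$, extract from \emph{that} sequence a $C^{1,\beta}$-convergent subsequence, and contradict the Hopf-type conclusion for its limit. Second, a small remark on where $\lambda_1$ comes from: in your setting the $\lambda$-term is nonnegative by design, so the mountain pass geometry is uniform for free; in the paper's ($C^1$) setting one has $F_\lambda(t)\le\frac{t^{r(x)}}{r(x)}+\frac{c_1t^{q(x)}}{q(x)}+\frac{\lambda}{2}$, so the estimate at $\|u\|_{1,p(\cdot)}=\delta$ carries the extra term $-\frac{\lambda}{2}|\Omega|$ and the threshold $\lambda_1$ must be chosen small precisely to absorb it (Lemma \ref{lemma1}). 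So the nonsmoothness is not a cosmetic issue: removing it is what forces both the truncation and the comparison step that your proposal omits.
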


\bigskip 

\section{Preliminaries}

Let $\mathcal{P}$ be the collection of all measurable functions %$p:\Omega\rightarrow [1,\infty] $ 
$p(\cdot):\Omega\rightarrow [1,\infty) $, with $p^+<\infty$ where, %We define
\begin{eqnarray*}
	p^{-} &=&ess~\inf_{x\in \Omega}~p(x)\text{,} \\
	p^{+} &=&ess~\sup_{x\in \Omega}~p(x)\text{.}
\end{eqnarray*}
 Given $p(\cdot )$, the conjugate
exponent $p^{\prime }(\cdot )$ is defined by%
\begin{equation*}
	\frac{1}{p(x)}+\frac{1}{p^{\prime }(x)}=1\text{.}
\end{equation*}

\begin{definition}
	We say that an exponent $p(\cdot) \in \mathcal{P}$ is $locally \ Log-H%
	\ddot{o}lder \ continuous$, $p(\cdot) \in LH_{0}$, if there exists a
	constant $C_0$ such that for any $x,y \in \Omega$, with $|x-y|<\frac{1}{2}$, 
	\begin{equation*}
		\lvert p(x)-p(y) \rvert< \dfrac{-C_0}{\log (|x-y|)}.
	\end{equation*}
	We say that $p(\cdot) \in \mathcal{P}$ is $Log-H\ddot{o}lder \ continuous
	\ at \ infinity$, $p(\cdot) \in LH_{\infty}$, with respect to a base point $%
	x_0 \in \Omega$ if there exist constants $C_{\infty}$ and $p_{\infty}$ such that
	for every $x \in \Omega$, 
	\begin{equation*}
		\lvert p(x)-p_{\infty} \rvert< \dfrac{C_{\infty}}{\log (e+|x-x_0|)}.
	\end{equation*}
	If $p(\cdot) \in LH_0 \cap LH_{\infty}$ we say that $p(\cdot)$ is $globally \ Log-H\ddot{o}lder \ continuous$ and we write $p(\cdot)\in\mathcal{P}^{log}$.
\end{definition}

\begin{definition}
    Given a exponent $p(\cdot) \in \mathcal{P}$, we say that $p(\cdot)$ is Hölder continuous on $\overline{\Omega}$, which is denoted by $p(\cdot) \in C^{0,\gamma} (\overline{\Omega})$, if there exist a positive constant $L$ and an exponent $\gamma \in (0,1)$ such that
    \begin{equation}
        \lvert p(x)-p(y) \rvert \leq L \lvert x-y \rvert^{\gamma} 
    \end{equation}
    for $x,y \in \overline{\Omega}$.

      We say that a function $u$ is in $C^{1,\gamma}(\overline{\Omega})$ if its gradient $\nabla u \in C^{0,\gamma} (\overline{\Omega})$.
\end{definition}

\begin{remark}
\label{p-regularity}
    Let us note that, as $\Omega$ is bounded, if $p(\cdot) \in LH_0(\Omega)$ %with $p_{+}<\infty$ 
    then $p(\cdot) \in LH_{\infty}$ (see \cite{CF}), hence $p(\cdot) \in \mathcal{P}^{log}$. Also, $p(\cdot) \in C^{0,\gamma} (\overline{\Omega})$ implies that $p(\cdot) \in LH_0(\overline\Omega)$.
\end{remark}

Given $p(\cdot) \in \mathcal{P}$ define the $variable \ Lebesgue \ space$ 
$L^{p(\cdot)}(\Omega)$ as the set of measurable functions $u$ on $\Omega$ for which
the modular
\begin{equation*}
	\rho _{p(\cdot )}(u)=\int_{\Omega}\lvert u(x)\rvert
	^{p(x)}dx \text{,}
\end{equation*}
satisfies $\rho_{p(\cdot )}(u/\lambda )<\infty $ for some $\lambda >0$. 

When the exponent is clear from context, we write simply $\rho_{p(\cdot)}=\rho$. This spaces are the Banach spaces with the norm,
\begin{equation*}
	\lVert u \rVert_{p(\cdot)}=\inf\left\lbrace \lambda>0 :
	\rho(u/\lambda)\leq 1\right\rbrace .
\end{equation*}

We need the following result proved in \cite{C-UFN}
\begin{lemma}\label{norma-modular} 
	Let $p(\cdot)\in \mathcal{P}$ with, then 
	\begin{itemize}
		\item if $\Vert u\Vert _{p(\cdot)}\leq 1,$
		\thinspace \thinspace $\Vert u\Vert _{p(\cdot )}^{p^{+}}\leq \int_{\Omega }|u(x)|^{p(x)}\,dx\leq \Vert
		u\Vert_{p(\cdot )}^{p^{-}}$,
		\item if $\Vert u\Vert _{p(\cdot )}\geq 1,$
		\thinspace \thinspace $\Vert u\Vert _{p(\cdot )}^{p^{-}}\leq \int_{\Omega }|u(x)|^{p(x)}\,dx\leq \Vert
		u\Vert _{p(\cdot )}^{p^{+}}$.
	\end{itemize}
	
	In particular, if $\Vert u\Vert _{p(\cdot )}\leq 1$%
	, 
	\begin{equation*}
		\int_{\Omega}|u(x)|^{p(x)}\,dx\leq \Vert u\Vert _{{p(\cdot )}}.
	\end{equation*}
\end{lemma}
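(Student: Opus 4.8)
The plan is to reduce the statement to two elementary facts about the modular $\rho$: its behaviour under scalar multiplication, and the fact that the infimum defining the norm is attained with modular exactly one. I would first record the scaling estimates. Writing $\rho(tu)=\int_{\Omega}t^{p(x)}|u(x)|^{p(x)}\,dx$ for $t>0$, I split according to whether $t\ge1$ or $t\le1$: when $t\ge1$ one has $t^{p^-}\le t^{p(x)}\le t^{p^+}$ pointwise, so that $t^{p^-}\rho(u)\le\rho(tu)\le t^{p^+}\rho(u)$, whereas when $0<t\le1$ the inequalities reverse to $t^{p^+}\rho(u)\le\rho(tu)\le t^{p^-}\rho(u)$. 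These follow at once by multiplying the pointwise bounds by $|u(x)|^{p(x)}\ge0$ and integrating.

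The key step, and the one I expect to be the main obstacle, is the \emph{unit ball property}: if $0<\lambda_0:=\Vert u\Vert_{p(\cdot)}<\infty$, then $\rho(u/\lambda_0)=1$. This is where the hypothesis $p^+<\infty$ is essential. Since $\lambda\mapsto\rho(u/\lambda)$ is non-increasing, the set $\{\lambda>0:\rho(u/\lambda)\le1\}$ is an interval with left endpoint $\lambda_0$; hence $\rho(u/\lambda)\le1$ for $\lambda>\lambda_0$ and $\rho(u/\lambda)>1$ for $\lambda<\lambda_0$. Letting $\lambda\downarrow\lambda_0$, the integrands $|u/\lambda|^{p(x)}$ increase to $|u/\lambda_0|^{p(x)}$, so the monotone convergence theorem gives $\rho(u/\lambda_0)\le1$. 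For the reverse inequality, finiteness of $p^+$ forces $\rho(u/\cdot)$ to be finite and continuous in a neighbourhood of $\lambda_0$ (indeed $\rho(u/\lambda)\le(\lambda_0/\lambda)^{p^+}\rho(u/\lambda_0)$ for $\lambda<\lambda_0$), so letting $\lambda\uparrow\lambda_0$ in $\rho(u/\lambda)>1$ yields $\rho(u/\lambda_0)\ge1$. Hence $\rho(u/\lambda_0)=1$.

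With these two ingredients the four inequalities follow by setting $v=u/\lambda_0$, so that $\rho(v)=1$, and applying the scaling estimates with $t=\lambda_0$. If $\Vert u\Vert_{p(\cdot)}=\lambda_0\le1$, the case $t\le1$ gives $\lambda_0^{p^+}=\lambda_0^{p^+}\rho(v)\le\rho(\lambda_0 v)=\rho(u)\le\lambda_0^{p^-}\rho(v)=\lambda_0^{p^-}$, which is exactly $\Vert u\Vert_{p(\cdot)}^{p^+}\le\rho(u)\le\Vert u\Vert_{p(\cdot)}^{p^-}$. If instead $\lambda_0\ge1$, the case $t\ge1$ gives $\lambda_0^{p^-}\le\rho(u)\le\lambda_0^{p^+}$, that is $\Vert u\Vert_{p(\cdot)}^{p^-}\le\rho(u)\le\Vert u\Vert_{p(\cdot)}^{p^+}$. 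The degenerate case $u=0$ is trivial, since both the norm and the modular vanish.

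Finally, the ``in particular'' assertion is immediate from the first item: when $\Vert u\Vert_{p(\cdot)}\le1$, and since $p^-\ge1$ forces $\Vert u\Vert_{p(\cdot)}^{p^-}\le\Vert u\Vert_{p(\cdot)}$, we conclude $\rho(u)\le\Vert u\Vert_{p(\cdot)}^{p^-}\le\Vert u\Vert_{p(\cdot)}$.
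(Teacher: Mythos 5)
Your proof is correct. There is nothing in the paper to compare it against: the lemma is stated as a known result imported from \cite{C-UFN} and never proved internally, so your argument is a genuine addition rather than a variant of an in-paper proof. What you have written is in fact the standard literature proof, organized cleanly into its two real ingredients: the pointwise scaling bounds $t^{p^-}\rho(u)\leq\rho(tu)\leq t^{p^+}\rho(u)$ for $t\geq 1$ (reversed for $t\leq 1$), and the unit-ball property $\rho\bigl(u/\Vert u\Vert_{p(\cdot)}\bigr)=1$, for which you correctly identify $p^+<\infty$ as the essential hypothesis (monotone convergence gives $\rho(u/\lambda_0)\leq 1$ for free, but the lower bound $\rho(u/\lambda_0)\geq 1$ genuinely uses the estimate $\rho(u/\lambda)\leq(\lambda_0/\lambda)^{p^+}\rho(u/\lambda_0)$, which fails for unbounded exponents). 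The one point worth polishing is the degenerate case: rather than dismissing ``$u=0$'' as trivial, you should observe that $\Vert u\Vert_{p(\cdot)}=0$ forces $u=0$ a.e.\ (if $u\neq 0$ on a set of positive measure, then $\rho(u/\lambda)\to\infty$ as $\lambda\downarrow 0$, again by monotone convergence), after which both the modular and all powers of the norm vanish and the inequalities hold with equality.
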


\begin{lemma}[H\"{o}lder's inequality]\label{HI} 
	Let $p(\cdot )\,\in \mathcal{P}$, if $f\in L^{p(\cdot )}(\Omega)$ and $g\in L^{p'(\cdot )}(\Omega)$ then $fg\in L^{1}(\Omega)$ and there
	exists a constant $c>1$ such that 
	\begin{equation*}
		\Vert fg\Vert _{1}\leq c\Vert f\Vert
		_{p(\cdot )}\Vert g\Vert_{p'(\cdot )}.
	\end{equation*}
%{\color{red}If $1<p^-\leq p(x)\leq p^+<\infty$} then $c=(\frac1{p^-}-\frac1{p^+}+1)$.
\end{lemma}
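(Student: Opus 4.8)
The plan is to reduce the variable-exponent Hölder inequality to the classical pointwise Young inequality after a normalization. The inequality is trivial if $\|f\|_{p(\cdot)}=0$ or $\|g\|_{p'(\cdot)}=0$, since then $f=0$ or $g=0$ a.e. and both sides vanish; so I assume both norms are positive and finite and set $\tilde f=f/\|f\|_{p(\cdot)}$ and $\tilde g=g/\|g\|_{p'(\cdot)}$. By homogeneity of the Luxemburg norm, $\|\tilde f\|_{p(\cdot)}=\|\tilde g\|_{p'(\cdot)}=1$, so Lemma \ref{norma-modular} (in the boundary case $\|\cdot\|=1$) yields
\[
\rho_{p(\cdot)}(\tilde f)=\int_\Omega |\tilde f|^{p(x)}\,dx\le 1,\qquad \rho_{p'(\cdot)}(\tilde g)=\int_\Omega |\tilde g|^{p'(x)}\,dx\le 1.
\]

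Next I would invoke Young's inequality with the pointwise exponent $p(x)$: for a.e. $x\in\Omega$,
\[
|\tilde f(x)\,\tilde g(x)|\le \frac{|\tilde f(x)|^{p(x)}}{p(x)}+\frac{|\tilde g(x)|^{p'(x)}}{p'(x)}.
\]
Integrating over $\Omega$ and using $1/p(x)\le 1$ and $1/p'(x)\le 1$, the right-hand side is bounded by $\rho_{p(\cdot)}(\tilde f)+\rho_{p'(\cdot)}(\tilde g)\le 2$. Undoing the normalization, i.e. multiplying through by $\|f\|_{p(\cdot)}\|g\|_{p'(\cdot)}$, gives
\[
\int_\Omega |fg|\,dx\le 2\,\|f\|_{p(\cdot)}\,\|g\|_{p'(\cdot)},
\]
which is the assertion with $c=2>1$; in particular the product $fg$, being measurable with finite integral of its absolute value, belongs to $L^{1}(\Omega)$.

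The only genuinely delicate point in the general theory is the set $A=\{x\in\Omega:p(x)=1\}$, where $p'(x)=+\infty$ and Young's inequality degenerates; there one must argue separately, bounding $\int_A|fg|\le \|g\|_{L^\infty(A)}\int_A|f|$ and reading the $L^{p'(\cdot)}$ norm as an essential supremum on $A$. However, under the running hypothesis of this paper $2\le p(x)<N$, so $p^-\ge 2$ and $A=\varnothing$; the conjugate exponent stays bounded, $p'(x)\le (p^-)'<\infty$, and Young's inequality applies on all of $\Omega$ without modification. Thus the main obstacle of the general theory does not arise in our setting, and the three displayed steps constitute a complete argument.
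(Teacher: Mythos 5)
Your proof is correct: the reduction to unit norms, the pointwise Young inequality with exponents $p(x)$ and $p'(x)$, and the modular bound from Lemma \ref{norma-modular} yield the inequality with $c=2$, and your handling of the degenerate set $\{p(x)=1\}$ (empty in this paper's setting, since $p\geq 2$) closes the only delicate point. The paper itself gives no proof of this lemma --- it is quoted as a standard fact of variable Lebesgue space theory (see \cite{CF} or \cite{DHHR}) --- and your argument is essentially the standard proof found in those references.
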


\begin{theorem}\label{topo}
	The topology of the Banach space $L^{p(\cdot)}(\Omega)$ endowed by the norm $\|\cdot\|_{p(\cdot)}$ coincides with the topology of modular convergence. % if and only if $p^+<\infty$.
\end{theorem}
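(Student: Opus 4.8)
The plan is to show that the identity map on $L^{p(\cdot)}(\Omega)$ is a homeomorphism between the norm topology and the topology of modular convergence; equivalently, that a sequence satisfies $\|u_n-u\|_{p(\cdot)}\to 0$ if and only if $\rho(u_n-u)\to 0$. Since a sequence converges in the modular sense precisely when $\rho(u_n-u)\to 0$, and since the norm topology is metrizable (hence determined by its convergent sequences), this equivalence is what the statement amounts to. The entire content lies in the hypothesis $p^{+}<\infty$ built into the definition of $\mathcal{P}$: this is the doubling ($\Delta_2$) condition for the modular, and without it modular convergence is strictly weaker than norm convergence.

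First I would establish the implication norm $\Rightarrow$ modular. Given $\varepsilon\in(0,1)$, set $\delta=\varepsilon^{1/p^{-}}\le 1$. If $\|u_n-u\|_{p(\cdot)}<\delta\le 1$, then by the first case of Lemma \ref{norma-modular},
\[
\rho(u_n-u)\le \|u_n-u\|_{p(\cdot)}^{\,p^{-}}<\delta^{\,p^{-}}=\varepsilon .
\]
Hence $\|u_n-u\|_{p(\cdot)}\to 0$ forces $\rho(u_n-u)\to 0$. Because the bound is uniform in $u$, this already shows that every modular neighborhood of a point contains a norm neighborhood of it.

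Next, the implication modular $\Rightarrow$ norm. The key preliminary observation is that $\rho(v)<1$ forces $\|v\|_{p(\cdot)}\le 1$: if instead $\|v\|_{p(\cdot)}>1$, the second case of Lemma \ref{norma-modular} gives $\rho(v)\ge \|v\|_{p(\cdot)}^{\,p^{-}}>1$, a contradiction. Consequently, once $\rho(u_n-u)<1$ one may apply the first case of the lemma to obtain $\|u_n-u\|_{p(\cdot)}^{\,p^{+}}\le \rho(u_n-u)$, that is,
\[
\|u_n-u\|_{p(\cdot)}\le \rho(u_n-u)^{1/p^{+}} .
\]
Since $p^{+}<\infty$, the right-hand side tends to $0$ together with $\rho(u_n-u)$, giving $\|u_n-u\|_{p(\cdot)}\to 0$. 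Again the estimate is uniform, so every norm neighborhood contains a modular one; combining the two directions shows that the identity map is bicontinuous and the two topologies coincide.

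The step I expect to require the most care is the passage from the equivalence of convergent sequences to the equality of topologies. The cleanest way to sidestep any delicate sequential argument is to phrase both implications as the uniform neighborhood-base comparisons indicated above, so that one checks directly that the norm balls and the modular balls $\{v:\rho(v-u)<\varepsilon\}$ generate the same neighborhood filter at each point. The finiteness of $p^{+}$ is exactly what makes the exponent $1/p^{+}$ positive and finite, and hence the modular-to-norm estimate effective; this is the single hypothesis on which the whole statement turns.
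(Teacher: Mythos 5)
Your proof is correct. There is nothing in the paper to compare it against: Theorem \ref{topo} is stated there without proof, as a known preliminary fact from the variable-exponent literature (see, e.g., \cite{CF} or \cite{DHHR}). Your argument is the standard one, and it uses exactly the tools the paper makes available, namely both cases of Lemma \ref{norma-modular}. The direction norm-to-modular, via $\rho(u_n-u)\le \|u_n-u\|_{p(\cdot)}^{p^-}$ when $\|u_n-u\|_{p(\cdot)}\le 1$, is in fact valid for any exponent (convexity of the modular already suffices); the genuine content, as you correctly identify, is the converse, where the estimate $\|u_n-u\|_{p(\cdot)}\le \rho(u_n-u)^{1/p^+}$ --- valid once $\rho(u_n-u)<1$, which by the second case of the lemma forces $\|u_n-u\|_{p(\cdot)}\le 1$ --- requires $p^+<\infty$; this is the $\Delta_2$-condition for the modular, and it is built into the paper's definition of $\mathcal{P}$. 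Your decision to phrase both directions as uniform ball-containment estimates, rather than only as a statement about convergent sequences, is also the right one: the modular is not subadditive, so the modular balls $\{v:\rho(v-u)<\varepsilon\}$ are not a priori a base of a topology, and it is precisely the two-sided uniform comparison with norm balls that shows the modularly-open sets form a topology coinciding with the norm topology.
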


\begin{definition}
	Let $\Omega$ a open set $p(\cdot) \in \mathcal{P}$. The Sobolev space $W^{1,p(\cdot)}(\Omega)$  consist all functions $u\in L^{p(\cdot)}(\Omega)$  with the absolute value of distributional gradient $|\nabla u| \in L^{p(\cdot)}(\Omega)$. 
	$W^{1,p(\cdot)}(\Omega)$ is equipped with the norm 
	$$
	\|u\|_{1,p(\cdot)}=\|u\|_{p(\cdot)}+\|\nabla u\|_{p(\cdot)}.
	$$
	We define the Sobolev space $W^{1,p(\cdot)}_{0}(\Omega)$ by $\overline{C_0^1(\Omega)}$, where the closure is with respect to the norm of $W^{1,p(\cdot)}(\Omega)$.
\end{definition}

\begin{theorem}[The Sobolev embedding, Theorem 8.3.1 \cite{DHHR}]\label{se}
	Let $p \in \mathcal{P}^{log}$ satisfy $1\leq p^-\leq p^+<N$. Then there exists a positive constant $K=K(\Omega,N,p)$, such that, for every $u\in W^{1,p(\cdot)}_0(\Omega)$, we have 
	$$
	\|u\|_{q(\cdot)}\leq K \|u\|_{1,p(\cdot)},
	$$ 
	for any $q(\cdot)\in[p(\cdot),p^*(\cdot)]$, with $p^*(\cdot)=\frac{Np(\cdot)}{N-p(\cdot)}$ and $\|u\|_{1,p(\cdot)}:=\|\nabla u\|_{p(\cdot)}$.
\end{theorem}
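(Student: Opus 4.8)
The plan is to deduce the full scale of embeddings $q(\cdot)\in[p(\cdot),p^{*}(\cdot)]$ from the single critical embedding $\|u\|_{p^{*}(\cdot)}\le K\|\nabla u\|_{p(\cdot)}$, and to obtain the latter from a pointwise Riesz-potential representation together with the boundedness of the Hardy--Littlewood maximal operator on $L^{p(\cdot)}$. First I would record that, since $\Omega$ is bounded and $p(\cdot)\le q(\cdot)\le p^{*}(\cdot)$ pointwise, there is a continuous inclusion $L^{p^{*}(\cdot)}(\Omega)\hookrightarrow L^{q(\cdot)}(\Omega)$: splitting $\Omega$ into $\{|u|\ge 1\}$ and $\{|u|<1\}$ and using $|u|^{q(x)}\le|u|^{p^{*}(x)}$ on the first set and $|u|^{q(x)}\le 1$ (integrable over the bounded set) on the second gives modular control of $\|u\|_{q(\cdot)}$ by $\|u\|_{p^{*}(\cdot)}$ via Lemma~\ref{norma-modular}. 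Hence it suffices to prove the endpoint estimate $\|u\|_{p^{*}(\cdot)}\le K\|\nabla u\|_{p(\cdot)}$.

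Next, because $W^{1,p(\cdot)}_{0}(\Omega)$ is by definition the closure of $C_{0}^{1}(\Omega)$, I would prove the endpoint inequality first for $u\in C_{0}^{1}(\Omega)$ and then pass to the limit, using lower semicontinuity of the modular (equivalently, continuity of the norms under $W^{1,p(\cdot)}$-convergence) together with Theorem~\ref{topo}. For a fixed such $u$, extended by zero to $\RR^{N}$, integration along rays emanating from $x$ yields the classical pointwise bound
\[
|u(x)|\le \frac{1}{N\omega_{N}}\int_{\RR^{N}}\frac{|\nabla u(y)|}{|x-y|^{N-1}}\,dy=c\,I_{1}(|\nabla u|)(x),
\]
where $I_{1}$ is the Riesz potential of order $1$. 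The endpoint Sobolev inequality therefore reduces to the mapping property $\|I_{1}f\|_{p^{*}(\cdot)}\le C\|f\|_{p(\cdot)}$ applied to $f=|\nabla u|$.

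The core of the argument is this Riesz-potential estimate, and here the hypothesis $p\in\mathcal{P}^{log}$ is essential. The route I would follow is Hedberg's method adapted to variable exponents: for $f$ normalized by $\|f\|_{p(\cdot)}\le 1$ one establishes a pointwise inequality of the form
\[
I_{1}f(x)\le C\,\big(Mf(x)\big)^{p(x)/p^{*}(x)}+\text{(lower-order log-H\"older corrections)},
\]
where $M$ is the Hardy--Littlewood maximal operator and the correction terms are controlled precisely by the local and at-infinity log-H\"older moduli of $p(\cdot)$ (this is exactly where $LH_{0}\cap LH_{\infty}$ is used). Raising to the power $p^{*}(x)$, integrating, and invoking the boundedness $M\colon L^{p(\cdot)}(\RR^{N})\to L^{p(\cdot)}(\RR^{N})$ then yields modular, hence norm, control of $I_{1}f$ in $L^{p^{*}(\cdot)}$ by $\|f\|_{p(\cdot)}$.

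I expect the main obstacle to be exactly the boundedness of the maximal operator on $L^{p(\cdot)}$ (Diening's theorem): this is the deep ingredient forcing the log-H\"older hypothesis, and it in turn requires $p^{-}>1$, so the delicate endpoint $p^{-}=1$ must be handled separately (by truncation and a weak-type argument rather than directly through $M$). Finally, I would note that replacing $\|u\|_{1,p(\cdot)}$ by $\|\nabla u\|_{p(\cdot)}$ on $W^{1,p(\cdot)}_{0}(\Omega)$ is justified by the Poincar\'e inequality $\|u\|_{p(\cdot)}\le C\|\nabla u\|_{p(\cdot)}$, itself a consequence (for bounded $\Omega$) of the critical embedding just established; thus $\|\nabla u\|_{p(\cdot)}$ is an equivalent norm on $W^{1,p(\cdot)}_{0}(\Omega)$ and the stated form of the inequality follows.
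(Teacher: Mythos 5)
The paper offers no proof of this statement at all: it is quoted directly as Theorem 8.3.1 of \cite{DHHR}, so the only comparison available is with the proof in that cited reference, and your outline follows essentially that same standard route (reduction to the critical exponent via the inclusion $L^{p^*(\cdot)}(\Omega)\hookrightarrow L^{q(\cdot)}(\Omega)$ on the bounded domain, the pointwise Riesz-potential bound $|u(x)|\le c\, I_1(|\nabla u|)(x)$ for $u\in C_0^1(\Omega)$ followed by density, and Hedberg's trick combined with the log-H\"older--based boundedness of the Hardy--Littlewood maximal operator on $L^{p(\cdot)}$). Your sketch is correct as far as it goes, and the one delicate point you defer rather than prove --- the endpoint $p^-=1$, where $M$ is unbounded and a weak-type/truncation argument is needed, as you indicate --- is in any case moot for this paper, which assumes $p(x)\ge 2$ throughout.
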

% {\color{red} Let us note that, as $\Omega$ is bounded and $p\in C^{0,\gamma}(\overline{\Omega})$, for some $\gamma\in(0,1)$, the last Theorem is true.}

The following theorem covers the results given in Theorem 1.2 of \cite{F1} and Theorem 4.1 of \cite{FZ2}, also see proposition 2.1 of \cite{F}.
\begin{theorem}\label{reg}
    Let $\Omega$ be a a bounded smooth domain in $\RR^N$, $g\in C(\overline{\Omega} \times \RR)$ and $p\in C(\overline{\Omega})$ with $1<p^-\leq p^+<\infty$. Consider the $p(x)$-Laplacian Dirichlet problem
    \begin{equation}\label{0.01}
	\left\{ 
	\begin{array}{rclc}
		-\Delta_{p(\cdot)} (u)(x)& = &  g(x,u),\qquad & \Omega,\\
  u & = & 0, & \partial \Omega .
	\end{array}%
	\right.,  
\end{equation}
\begin{itemize}
    \item[(a)] If $f$ satisfy the sub-critical growth condition:
    $$|g(x,t)|\leq c_1+c_2|t|^{q(x)-1}, \qquad  \text{for all }\, x\in\overline{\Omega} \text{ and } \, t\in \RR,$$
    where $q\in C(\overline{\Omega})$ and $1<q(x)<p^*(x)$, for all $x\in\overline{\Omega}$. Then $u \in L^\infty(\Omega)$ for every weak solution u of \eqref{0.01}.
        \item[(b)] Let $u\in W^{1,p(\cdot)}_0(\Omega)\cap L^\infty(\Omega)$ be a weak solution of \eqref{0.01}. If the exponent $p(\cdot) \in C^{0,\gamma} (\overline{\Omega})$ then $u\in C^{1,\alpha}(\overline{\Omega})$, for some $\alpha \in (0,1)$.
\end{itemize}
\end{theorem}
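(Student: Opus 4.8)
The plan is to treat the two parts by different machinery: part (a) is an a priori $L^\infty$ bound obtained by a De Giorgi level-set iteration adapted to the variable exponent, while part (b) reduces, via the boundedness of $u$, to the known gradient-regularity theory for the $p(\cdot)$-Laplacian with a bounded right-hand side. Throughout I would exploit the conversions between norm and modular (Lemma \ref{norma-modular}), Hölder's inequality (Lemma \ref{HI}), and the Sobolev embedding (Theorem \ref{se}).

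For part (a), fix a weak solution $u$; since the argument for $u^-$ is identical after replacing $u$ by $-u$ and $g(x,t)$ by $-g(x,-t)$, it suffices to bound $u^+$. For each $k\ge 1$ I would insert the admissible test function $v=(u-k)^+\in W_0^{1,p(\cdot)}(\Omega)$ into the weak formulation
$$\int_{\Omega}|\nabla u|^{p(x)-2}\langle\nabla u,\nabla v\rangle\,dx=\int_{\Omega}g(x,u)\,v\,dx,$$
which, because $\nabla v=\nabla u\,\mathbf{1}_{A_k}$ with $A_k=\{x\in\Omega:u(x)>k\}$, yields the energy identity
$$\int_{A_k}|\nabla u|^{p(x)}\,dx=\int_{A_k}g(x,u)\,(u-k)\,dx.$$
I would then bound the right-hand side using the subcritical growth $|g(x,t)|\le c_1+c_2|t|^{q(x)-1}$, applying Hölder's inequality and the Sobolev embedding to control $\|(u-k)^+\|_{q(\cdot)}$ by the modular of $\nabla(u-k)^+$; the strict inequality $q(x)<p^*(x)$ is exactly what supplies an exponent gap, producing a recursive inequality in which $|A_k|$ enters with a power strictly greater than one. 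The bookkeeping requires splitting $\{|u|\le 1\}$ and $\{|u|>1\}$ whenever passing between norm and modular, since the two Lemma \ref{norma-modular} regimes behave oppositely. A standard De Giorgi iteration lemma then forces $|A_{k_0}|=0$ for some finite $k_0$, i.e. $u\le k_0$ a.e.; this recovers Theorem 1.2 of \cite{F1}.

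For part (b), the key simplification is that $u\in L^\infty(\Omega)$ and $g\in C(\overline{\Omega}\times\RR)$ together force the composite datum $h(x):=g(x,u(x))$ to be bounded, so $u$ solves $-\Delta_{p(\cdot)}u=h$ with $h\in L^\infty(\Omega)$ and Hölder-continuous exponent. By Remark \ref{p-regularity}, $p(\cdot)\in C^{0,\gamma}(\overline{\Omega})$ gives $p(\cdot)\in\mathcal{P}^{log}$, the structural hypothesis the regularity theory needs. I would first invoke the De Giorgi--Nash--Moser theory for variable-exponent equations (Theorem 4.1 of \cite{FZ2}) to upgrade $u$ to $C^{0,\beta}(\overline{\Omega})$, using the smoothness of $\partial\Omega$ and the homogeneous boundary condition for the boundary estimate. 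Then I would apply the global gradient estimate of Proposition 2.1 of \cite{F}: its proof freezes the exponent at a point $x_0$, compares $u$ locally with the solution of the constant-exponent $p(x_0)$-Laplacian (for which Tolksdorf--DiBenedetto--Lieberman supply $C^{1,\alpha}$ bounds), and absorbs the mismatch using the Hölder modulus of $p(\cdot)$, giving $\nabla u\in C^{0,\alpha}(\overline{\Omega})$.

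The hardest part is the exponent-freezing step underlying (b): one must quantify how the variable-exponent solution deviates from the frozen constant-exponent comparison function and show this deviation decays geometrically under dyadic scaling, so that the accumulated error is summable and yields a single uniform exponent $\alpha$. It is precisely the Hölder continuity of $p(\cdot)$, together with the a priori $L^\infty$ bound established in part (a), that makes this error controllable; these are the two hypotheses doing the essential work.
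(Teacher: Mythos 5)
Your proposal cannot be compared against a proof in the paper, because the paper does not prove Theorem \ref{reg} at all: it is presented as a compilation of known results, with the $L^\infty$ bound quoted from Theorem 1.2 of \cite{F1}, the H\"older/gradient regularity from Theorem 4.1 of \cite{FZ2} and Proposition 2.1 of \cite{F}. What you have done is reconstruct, in outline, the actual arguments behind those citations, and your reconstruction is faithful to them: part (a) in \cite{F1} is indeed proved by testing with truncations $(u-k)^{+}$ and running a De Giorgi--Stampacchia level-set iteration in which the uniform gap $\inf_{\overline\Omega}(p^{*}-q)>0$ (available by continuity and compactness) yields the superlinear power of $|A_k|$, and part (b) in \cite{F1}/\cite{F} is proved exactly by the reduction you describe --- boundedness of $h(x)=g(x,u(x))$, interior plus boundary H\"older estimates, then exponent-freezing comparison with the constant-exponent $p(x_0)$-Laplacian and Tolksdorf--DiBenedetto--Lieberman $C^{1,\alpha}$ theory, with the H\"older modulus of $p(\cdot)$ absorbing the commutation error. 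So your route is correct in structure where the paper simply defers; what the citation buys the paper is that the genuinely hard analysis (the iteration bookkeeping with modulars, and the quantitative decay of the frozen-exponent comparison under dyadic rescaling, which you rightly flag as the hardest step) is done once and for all in the references, whereas your sketch identifies the machinery but does not carry it out.

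One small caveat worth fixing if you were to write this up: in part (a) you invoke the Sobolev embedding of Theorem \ref{se}, which as stated in the paper requires $p(\cdot)\in\mathcal{P}^{log}$, while Theorem \ref{reg}(a) assumes only $p\in C(\overline\Omega)$. For merely continuous exponents you should instead use the strictly subcritical embedding $W_0^{1,p(\cdot)}(\Omega)\hookrightarrow L^{q(\cdot)}(\Omega)$ valid whenever $q\in C(\overline\Omega)$ with $q<p^{*}$ uniformly (obtained by covering $\Omega$ with small balls on which the oscillation of $p$ is small, as in \cite{FZ2}); this is harmless for the paper's application, where $p(\cdot)\in C^{0,\gamma}(\overline\Omega)$ implies log-H\"older continuity by Remark \ref{p-regularity}, but it matters for the theorem at the stated level of generality.
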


\begin{definition}
    Let $u$ and $v$ in $W^{1,p(\cdot)}(\Omega)$. We say that $-\Delta_p(\cdot)(u)\leq -\Delta_p(\cdot)(v)$ if for all $\phi\in W^{1,p(\cdot)}_0(\Omega)$, with $\phi\geq 0$
    $$
    \int_{\Omega }\left\vert \nabla u\right\vert ^{p(x)-2}\left\langle \nabla u,\nabla \phi\right\rangle dx \leq \int_{\Omega }\left\vert \nabla v\right\vert ^{p(x)-2}\left\langle \nabla v,\nabla \phi\right\rangle dx.
    $$
\end{definition}
Now we give a comparison principle as follows.
\begin{theorem}[See \cite{FZZ}, Lemma 2.2; \cite{F}, Proposition 2.3]\label{com}  Let $u$ and $v$ in $W^{1,p(\cdot)}(\Omega)$. If $-\Delta_p(\cdot)(u)\leq -\Delta_p(\cdot)(v)$ and $u\leq v$ on $\partial\Omega$, (i.e. $(u-v)^+\in W^{1,p(\cdot)}_0(\Omega)$), then $u\leq v$ in $\Omega$.    
\end{theorem}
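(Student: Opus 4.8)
The plan is to prove this weak comparison principle by the standard device of testing the defining inequality against the positive part of the difference $u-v$, and then extracting information from the strict monotonicity of the $p(\cdot)$-Laplacian vector field $\xi\mapsto|\xi|^{p(x)-2}\xi$.

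First I would set $\phi=(u-v)^{+}:=\max\{u-v,0\}$. The hypothesis that $u\leq v$ on $\partial\Omega$ means, by the very way it is phrased, that $(u-v)^{+}\in W^{1,p(\cdot)}_0(\Omega)$; together with $\phi\geq0$ this makes $\phi$ an admissible test function in the definition of $-\Delta_{p(\cdot)}(u)\leq-\Delta_{p(\cdot)}(v)$. Since $\nabla\phi=\nabla u-\nabla v$ a.e.\ on $\{u>v\}$ and $\nabla\phi=0$ a.e.\ on $\{u\leq v\}$, inserting $\phi$ into the defining inequality and rearranging gives
\begin{equation*}
\int_{\{u>v\}}\left\langle |\nabla u|^{p(x)-2}\nabla u-|\nabla v|^{p(x)-2}\nabla v,\ \nabla u-\nabla v\right\rangle dx\leq 0,
\end{equation*}
because the integrand is supported on $\{u>v\}$.

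Next I would invoke the pointwise monotonicity of $\xi\mapsto|\xi|^{p-2}\xi$. As $p(x)\geq2$ for every $x\in\Omega$, the classical strong monotonicity inequality
\begin{equation*}
\left\langle |a|^{p-2}a-|b|^{p-2}b,\ a-b\right\rangle\geq 2^{2-p}\,|a-b|^{p}
\end{equation*}
holds for all $a,b\in\RR^{N}$ and all $p\geq2$, with the right-hand side strictly positive whenever $a\neq b$. Applying it pointwise with $a=\nabla u(x)$, $b=\nabla v(x)$, $p=p(x)$ shows that the integrand in the displayed inequality is nonnegative a.e.\ on $\{u>v\}$. Combined with the bound $\leq0$, the integrand must vanish a.e.\ on $\{u>v\}$, which forces $|\nabla u-\nabla v|^{p(x)}=0$, i.e.\ $\nabla u=\nabla v$ a.e.\ on $\{u>v\}$. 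Hence $\nabla\phi=0$ a.e.\ in $\Omega$.

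Finally, with $\phi\in W^{1,p(\cdot)}_0(\Omega)$ and $\|\nabla\phi\|_{p(\cdot)}=0$, the Sobolev embedding of Theorem~\ref{se} yields $\|\phi\|_{q(\cdot)}\leq K\|\nabla\phi\|_{p(\cdot)}=0$, so $\phi=(u-v)^{+}=0$ a.e.; that is, $u\leq v$ in $\Omega$. The one delicate point is the monotonicity step: one must use the \emph{strict} positivity of the vector inequality to conclude $\nabla u=\nabla v$, not merely that the integrand is small. Because the paper operates in the regime $p(x)\geq2$, the strong monotonicity inequality above applies uniformly and this obstacle dissolves; had the range $1<p(x)<2$ been permitted, one would instead combine the companion estimate $\left\langle|a|^{p-2}a-|b|^{p-2}b,a-b\right\rangle\geq c_{p}\,|a-b|^{2}/(|a|+|b|)^{2-p}$ with a H\"older argument to reach the same conclusion.
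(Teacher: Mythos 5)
Your proof is correct and is essentially the argument behind the result as the paper uses it: the paper gives no proof of Theorem \ref{com} at all, citing instead \cite{FZZ} (Lemma 2.2) and \cite{F} (Proposition 2.3), whose proofs are exactly your standard device of testing the inequality with $(u-v)^{+}$ and exploiting the pointwise strict monotonicity of $\xi\mapsto|\xi|^{p(x)-2}\xi$ (the cited versions cover $1<p^{-}$ via the case-split inequality you mention in your closing remark, while your $2^{2-p}$ estimate suffices here since the paper assumes $p(x)\geq2$). The one step worth flagging is the last one: deducing $\phi=0$ from $\nabla\phi=0$ via Theorem \ref{se} needs $p(\cdot)\in\mathcal{P}^{log}$ and $p^{+}<N$, which the bare statement of Theorem \ref{com} does not assume but which holds under the paper's standing hypotheses ($p(\cdot)\in C^{0,\gamma}(\overline{\Omega})$ together with Remark \ref{p-regularity}, and $2\leq p(x)<N$), so your argument is sound in the setting where the theorem is actually applied.
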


\

\section{Proof of the main result }

\bigskip 

In order to proof of Theorem \ref{principalTheorem} we define $f_{\lambda}$ as follow,

\begin{equation*}
f_{\lambda}(t)=	\left\{ 
	\begin{array}{rrl}
		f(t)-\lambda,& \quad\qquad t>0,&\\
-\lambda(t+1),&  -1\leq t\leq 0,&\\ 
  0,& t<-1 .&
	\end{array}%
	\right.
\end{equation*}

We prove the existence of a positive solution for the following auxiliary problem:
\begin{equation}\label{0.2}
	\left\{ 
	\begin{array}{rclc}
		-\Delta_{p(\cdot)} (u)& = &  f_\lambda(u)\qquad & \Omega,\\
u&>&0, &\Omega,  \\ 
  u & = & 0, & \partial \Omega .
	\end{array}%
	\right.,  
\end{equation}
because a solution of \eqref{0.2} is also a solution of \eqref{0.1}. Associated to \eqref{0.2}, we have
the energy functional  $E_{\lambda}:W^{1,p(\cdot)}(\Omega)\to\RR$ defined by,
\[
E_{\lambda}(u)=\int_{\Omega }\frac{\left\vert \nabla u\right\vert ^{p(x)}}{p(x)}\,dx-\int_{\Omega }F_{\lambda}(u)\,dx\text{,}
\]
where $F_{\lambda}(t)=\int_{0}^{t}f_{\lambda}(t)(s)\,ds$. This functional is Fr\'echet differentiable, the derivative $E_{\lambda}^{\prime }:W_{0}^{1,p(\cdot )}(\Omega)\rightarrow \left(
W_{0}^{1,p(\cdot )}(\Omega)\right) ^{\ast }$, where for $u\in W_{0}^{1,p(\cdot)}(\Omega)$, $
E_{\lambda}^{\prime }(u):W_{0}^{1,p(\cdot )}(\Omega)\rightarrow \mathbb{R}$ is given by
\begin{equation}\label{der}
\left\langle E_{\lambda}^{\prime }(u),v\right\rangle =\int_{\Omega }\left\vert
\nabla u\right\vert ^{p(x)-2}\left\langle \nabla u,\nabla v\right\rangle
dx-\int_{\Omega }f_{\lambda}(u)v\,dx\text{.}
\end{equation}

The critical points $u$ of $E_{\lambda}$, i.e.,

\begin{equation}\label{CriticalPoint}
    \left\langle E_{\lambda}^{\prime }(u),v\right\rangle = 0,
\end{equation}
for all $v \in W_0^{1,p(\cdot)}$, are weak solutions of \eqref{0.2}.
So next we need only consider the existence of nontrivial critical points of $E_{\lambda}$. 

We observe that, by $(f1)$ and $(f2)$, there exists a constant $c_1>0$, such that  
\begin{equation}\label{2}
f(t)\leq t^{r(x)-1}+c_{1}t^{q(x)-1}\text{.}   
\end{equation}
%where $c_1=\frac{p^-}{2p^+K^{p^+}}$. 
If $t>0$,%
$$
F_{\lambda}(t)\leq\int_{0}^{t}f(s)ds \leq \frac{1}{r(x)}t^{r(x)}+\frac{c_{1}}{q(x)}%
t^{q(x)}\text{.}    
$$
If $t<0$, the area of triangle is less than $\frac{\lambda}{2}$. Therefore, for
all $t\in \mathbb{R}$ we must have that,%
\begin{align}\label{Fa}
F_{\lambda}(t)\leq \frac{1}{r(x)}t^{r(x)}+\frac{c_{1}}{q(x)}t^{q(x)}+\frac{\lambda}{2}\text{.}
\end{align}

The next lemmas will be useful to prove that $E_\lambda$ verifies the mountain pass geometry.

\begin{lemma}\label{lemma1}
    There exists $0<\delta_1<1$, such that if $\delta \in(0,\delta_1)$ and $\|u\|_{1,p(\cdot)}= \delta$, then there exist  
    $\lambda_1=\lambda_1(p(\cdot),q(\cdot),\delta)$ and $\Lambda_1=\Lambda_1(p(\cdot),q(\cdot),\delta,\lambda_1)$, such that $E_\lambda(u)>\Lambda_1$ for all $\lambda \in (0,\lambda_1)$.   Moreover, the constant $\delta_1$ and $\delta$ are independent of $\lambda \in (0,\lambda_1)$.
\end{lemma}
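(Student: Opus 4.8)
The plan is to bound $E_{\lambda}$ from below on the sphere $\{\|u\|_{1,p(\cdot)}=\delta\}$ by an expression of the shape $\frac{1}{p^{+}}\delta^{p^{+}}-C_{1}\delta^{r^{-}}-C_{2}\delta^{q^{-}}-\frac{\lambda}{2}|\Omega|$, and then to exploit the hypotheses $r^{-}>p^{+}$ and $q^{-}>p^{+}$ from $(f2)$ and $(f3)$: since the exponents $r^{-},q^{-}$ of the subtracted terms strictly exceed $p^{+}$, for $\delta$ small the gradient term dominates the two nonlinear terms and leaves a strictly positive quantity; afterwards $\lambda$ is chosen small enough that the additive constant $\frac{\lambda}{2}|\Omega|$ cannot destroy this positivity. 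Throughout I work with $\delta<1$ so that the modular--norm comparison of Lemma \ref{norma-modular} is available in both directions.

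For the gradient term, since $\|\nabla u\|_{p(\cdot)}=\delta\le 1$, the first part of Lemma \ref{norma-modular} gives $\int_{\Omega}|\nabla u|^{p(x)}\,dx\ge\|\nabla u\|_{p(\cdot)}^{p^{+}}=\delta^{p^{+}}$, hence $\int_{\Omega}\frac{|\nabla u|^{p(x)}}{p(x)}\,dx\ge\frac{1}{p^{+}}\delta^{p^{+}}$. For the potential term I integrate the pointwise bound \eqref{Fa}, obtaining
\[
\int_{\Omega}F_{\lambda}(u)\,dx\le\frac{1}{r^{-}}\int_{\Omega}|u|^{r(x)}\,dx+\frac{c_{1}}{q^{-}}\int_{\Omega}|u|^{q(x)}\,dx+\frac{\lambda}{2}|\Omega|.
\]
By the Sobolev embedding (Theorem \ref{se}), $\|u\|_{r(\cdot)}\le K\delta$ and $\|u\|_{q(\cdot)}\le K\delta$; requiring $\delta\le 1/K$ forces these norms to be $\le 1$, so a further application of Lemma \ref{norma-modular}, now for the exponents $r(\cdot)$ and $q(\cdot)$, yields $\int_{\Omega}|u|^{r(x)}\,dx\le(K\delta)^{r^{-}}$ and $\int_{\Omega}|u|^{q(x)}\,dx\le(K\delta)^{q^{-}}$. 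Combining the two estimates gives
\[
E_{\lambda}(u)\ge\frac{1}{p^{+}}\delta^{p^{+}}-\frac{K^{r^{-}}}{r^{-}}\delta^{r^{-}}-\frac{c_{1}K^{q^{-}}}{q^{-}}\delta^{q^{-}}-\frac{\lambda}{2}|\Omega|.
\]

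Now I factor $\delta^{p^{+}}$ out of the first three terms: because $r^{-},q^{-}>p^{+}$, the bracketed factor $\frac{1}{p^{+}}-\frac{K^{r^{-}}}{r^{-}}\delta^{r^{-}-p^{+}}-\frac{c_{1}K^{q^{-}}}{q^{-}}\delta^{q^{-}-p^{+}}$ tends to $\frac{1}{p^{+}}>0$ as $\delta\to0^{+}$. Hence there is $\delta_{1}\in(0,\min\{1,1/K\})$, depending only on $p(\cdot),q(\cdot),r(\cdot),K,c_{1}$ and not on $\lambda$, such that for every $\delta\in(0,\delta_{1})$ this factor is at least $\frac{1}{2p^{+}}$, giving $g(\delta):=\frac{1}{p^{+}}\delta^{p^{+}}-\frac{K^{r^{-}}}{r^{-}}\delta^{r^{-}}-\frac{c_{1}K^{q^{-}}}{q^{-}}\delta^{q^{-}}\ge\frac{1}{2p^{+}}\delta^{p^{+}}>0$. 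Fixing such a $\delta$, I set $\lambda_{1}:=g(\delta)/|\Omega|$ and $\Lambda_{1}:=\tfrac12 g(\delta)>0$; then for every $\lambda\in(0,\lambda_{1})$ one has $E_{\lambda}(u)\ge g(\delta)-\frac{\lambda}{2}|\Omega|>g(\delta)-\tfrac12 g(\delta)=\Lambda_{1}$, which is the claim. Since $\delta_{1}$ and $\delta$ were selected before $\lambda_{1}$ and do not involve $\lambda$, the stated independence is automatic.

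The only genuinely delicate point is the bookkeeping of the two regimes in Lemma \ref{norma-modular}: the gradient estimate needs $\|\nabla u\|_{p(\cdot)}\le 1$, while the nonlinear estimates need $\|u\|_{r(\cdot)},\|u\|_{q(\cdot)}\le 1$, and these are reconciled by the single smallness requirement $\delta\le\min\{1,1/K\}$. Everything else is the standard mountain-pass subcriticality computation, in which the strict inequalities $r^{-}>p^{+}$ and $q^{-}>p^{+}$ are exactly what render the superlinear contributions negligible against $\delta^{p^{+}}$ near the origin.
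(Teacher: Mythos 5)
Your proof is correct and follows essentially the same route as the paper's: a modular lower bound for the gradient term via Lemma \ref{norma-modular}, the estimate \eqref{Fa} combined with the Sobolev embedding of Theorem \ref{se} and Lemma \ref{norma-modular} for the $r(\cdot)$- and $q(\cdot)$-terms, and then smallness of $\delta$ (exploiting $r^{-},q^{-}>p^{+}$) followed by smallness of $\lambda$. The only cosmetic differences are that you absorb the Sobolev constant by requiring $\delta\leq 1/K$ whereas the paper rescales to $u/K$ inside the modular, and that you make the admissible choices $\lambda_{1}=g(\delta)/|\Omega|$, $\Lambda_{1}=\tfrac12 g(\delta)$ explicit where the paper leaves them implicit.
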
 

\begin{proof}
By Remark \ref{p-regularity} and Theorem \ref{se}, 
if $\|u\|_{1,p(\cdot)}\leq 1$ then
\begin{align*}
     \left\|\frac{u}{K}\right\|_{q(\cdot)}<1,
\end{align*}
for all $q(x)\in[p(x),p^*(x)]$.
By inequality \eqref{Fa}  and Lemma \ref{norma-modular},
\begin{eqnarray*}
E_{\lambda }(u) &=&\int_{\Omega }\frac{\left\vert \nabla u(x)\right\vert
^{p(x)}}{p(x)}dx-\int_{\Omega }F_{\lambda }(u(x))dx \\
&\geq &\frac{1}{p^{+}}\left\Vert u\right\Vert _{1,p(\cdot )}^{p^{+}}-\frac{
1}{r^{-}}\int_{\Omega }u(x)^{r(x)}dx-\frac{c_{1}}{q^{-}}\int_{\Omega
}u(x)^{q(x)}dx-\frac{\lambda }{2}\left\vert \Omega \right\vert  \\
&\geq &\frac{1}{p^{+}}\left\Vert u\right\Vert _{1,p(\cdot )}^{p^{+}}-\frac{%
1}{r^{-}}K^{r^{+}}\int_{\Omega }\left( \frac{u(x)}{K}\right) ^{r(x)}dx-%
\frac{c_{1}}{q^{-}}K^{q^{+}}\int_{\Omega }\left( \frac{u(x)}{K}\right)
^{q(x)}dx-\frac{\lambda }{2}\left\vert \Omega \right\vert  \\
&\geq &\frac{1}{p^{+}}\left\Vert u\right\Vert _{1,p(\cdot )}^{p^{+}}-\frac{1}{r^{-}}K^{r^{+}}\left\Vert \frac{u}{K}\right\Vert _{r(\cdot )}^{r^{-}}-%
\frac{c_{1}}{q^{-}}K^{r^{+}}\left\Vert \frac{u}{K}\right\Vert _{q(\cdot
)}^{q^{-}}-\frac{\lambda }{2}\left\vert \Omega \right\vert  \\
&\geq &\frac{1}{p^{+}}\left\Vert u\right\Vert _{1,p(\cdot
)}^{p^{+}}-C\left\Vert u\right\Vert _{1,p(\cdot )}^{p_0}-\frac{\lambda }{%
2}\left\vert \Omega \right\vert \text{,}
\end{eqnarray*}
where $p_{0} =\min \{r^{-},q^{-}\}>p^{+}$. So we can choose $\delta_{1}$ small
enough, and $\lambda_{1}$, such that if $\delta\in (0,\delta_1)$ and $\lambda \in (0,\lambda_1)$,
\begin{equation}\label{eq1}
   E_{\lambda }(u)\geq \Lambda_1 (p(\cdot ),p_0 ,\delta_{1},\lambda _{1})>0 .
\end{equation}
\end{proof}

\begin{lemma}\label{lemma2}
  There exists $v\in W^{1,p(\cdot)}(\Omega)$ such that $\|v\|_{1,p(\cdot)}>1$ and $E_{\lambda}(v)<0$, for all $\lambda \in(0,\lambda_1)$.
\end{lemma}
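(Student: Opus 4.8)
The plan is to exploit the Ambrosetti--Rabinowitz condition $(f4)$, which forces $F$ to grow faster than the power $t^{p^+}$, and then to evaluate $E_\lambda$ along a ray $t\mapsto tw$ for a fixed nontrivial nonnegative $w$ and let $t\to+\infty$.

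First I would record the standard consequence of $(f4)$: integrating the inequality $\theta F(t)\le f(t)t=tF'(t)$ shows that $t\mapsto F(t)t^{-\theta}$ is nondecreasing on $[t_0,\infty)$, and, after enlarging $t_0$ so that $F(t_0)>0$, this yields constants $c_0>0$ and $c_1>0$ with $F(t)\ge c_0 t^\theta-c_1$ for every $t\ge 0$, where $\theta>p^+$. Next I would relate $F_\lambda$ to $F$ on the positive axis: since $f(0)=0$ by $(f1)$, for $t>0$ one has $f_\lambda(t)=f(t)-\lambda$ and hence $F_\lambda(t)=F(t)-\lambda t$. Combining these gives, for $t>0$, the lower bound $F_\lambda(t)\ge c_0 t^\theta-c_1-\lambda t$.

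Then I would fix $w\in C_0^\infty(\Omega)$ with $w\ge 0$ and $w\not\equiv 0$, so that $\int_\Omega w^\theta\,dx>0$ and $w$ is bounded, and estimate $E_\lambda(tw)$ for $t\ge 1$. The gradient term is controlled by homogeneity, namely $\int_\Omega\frac{|\nabla(tw)|^{p(x)}}{p(x)}\,dx\le\frac{t^{p^+}}{p^-}\int_\Omega|\nabla w|^{p(x)}\,dx$, using $t^{p(x)}\le t^{p^+}$ for $t\ge 1$. Using the lower bound on $F_\lambda$, together with $F_\lambda(0)=0$ on the set where $w=0$, to bound $\int_\Omega F_\lambda(tw)\,dx$ from below, I obtain an inequality of the form
\[
E_\lambda(tw)\le A\,t^{p^+}-B\,t^\theta+\lambda_1 D\,t+c_1|\Omega|,
\]
with $A,B,D>0$ independent of $\lambda$, where I have used $\lambda<\lambda_1$ to absorb the linear term uniformly in $\lambda$.

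Since $\theta>p^+>1$, the right-hand side tends to $-\infty$ as $t\to+\infty$; in particular there is a fixed $t^\ast$, independent of $\lambda\in(0,\lambda_1)$, for which the bound is negative and simultaneously $\|t^\ast w\|_{1,p(\cdot)}=t^\ast\|w\|_{1,p(\cdot)}>1$ by homogeneity of the norm. Taking $v=t^\ast w$ finishes the proof. The only genuinely delicate point is the passage from $(f4)$ to the global power-type lower bound $F(t)\ge c_0 t^\theta-c_1$, where one must ensure $F(t_0)>0$ after possibly increasing $t_0$; the required uniformity in $\lambda$ is then automatic, because the $\lambda$-contribution is only linear in $t$ and hence negligible against $t^\theta$.
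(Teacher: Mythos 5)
Your proposal is correct and takes essentially the same route as the paper: both evaluate $E_\lambda$ along a ray $t\mapsto t\varphi$ with $\varphi\in C_0^\infty(\Omega)$ nonnegative, use the Ambrosetti--Rabinowitz condition $(f4)$ to get a lower bound $F(t)\geq c_0t^{\theta}-c_1$, bound the gradient term by $\frac{t^{p^+}}{p^-}\int_\Omega|\nabla\varphi|^{p(x)}\,dx$ for $t\geq 1$, absorb the $\lambda$-term uniformly via $\lambda<\lambda_1$, and conclude from $\theta>p^+$ that the energy is negative for a fixed large $t$. You are in fact somewhat more careful than the paper, making explicit the monotonicity argument behind $F(t)\geq c_0t^{\theta}-c_1$ (including the need for $F(t_0)>0$) and the identity $F_\lambda(t)=F(t)-\lambda t$ for $t>0$, both of which the paper uses implicitly.
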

\begin{proof}
We take $\varphi \in C_{0}^{\infty }(\Omega)$, $\varphi >0$, with $\left\Vert
\varphi \right\Vert _{1,p(\cdot )}=1$. For all $t>1$, 
\begin{eqnarray*}
E_{\lambda}(t\varphi ) &=&\int_{\Omega }\frac{\left\vert \nabla t\varphi \right\vert ^{p(x)}}{p(x)}
dx-\int_{\Omega }F(t\varphi )dx+\lambda\int_{\Omega }t\varphi ~dx\text{.}
\end{eqnarray*}
By $(f4)$ the function $F$ verifies the inequality $\theta y\leq y't$, for all $t\geq t_0$, then there exist positive constants $A$ and $B$ such that  

\[
F(t)\geq
At^{\theta }+B\text{.}
\]
for all $t>0$.

Now, let considered $t>1$,  
\begin{eqnarray}\label{eq2}
E_{\lambda}(t\varphi ) &=&\int_{\Omega }\frac{\left\vert \nabla t\varphi \right\vert ^{p(x)}}{p(x)}
dx-\int_{\Omega }F(t\varphi )\,dx+\lambda\int_{\Omega }t\varphi \,dx\\ \nonumber
&\leq &\frac{t^{p_{+}}}{p_{-}}\int_{\Omega }\left\vert \nabla \varphi
\right\vert ^{p(x)}dx-At^{\theta }\int_{\Omega }\varphi ^{\theta
}dx-B\left\vert \Omega \right\vert +t\lambda_1\left\Vert \varphi \right\Vert _{1} \\ \nonumber
&\leq&\frac{t^{p_{+}}}{p_{-}}\left\Vert \varphi \right\Vert _{1,p(\cdot
)}-At^{\theta }\left\Vert \varphi \right\Vert^{\theta } _{\theta
}-B\left\vert \Omega \right\vert +t\lambda_1\left\Vert \varphi \right\Vert _{1} \\ \nonumber
&=&\frac{t^{p_{+}}}{p_{-}}-At^{\theta }\left\Vert \varphi \right\Vert^{\theta
} _{\theta }-B\left\vert \Omega \right\vert +t\lambda_1\left\Vert \varphi
\right\Vert _{1}. \nonumber
\end{eqnarray}
So, since $\theta >p_{+}$, we can take $t_1>t_{0}$, with $t_1=t_1(p(\cdot),\theta,f,\phi,\lambda_1,\Omega)$, such that 
\[
E_{\lambda}(t_1\varphi )<0,
\]
   finally take $v=t_1\varphi$.
\end{proof}

\begin{lemma}\label{sol}
	  There exist $\Lambda_2>0$ such that, for each $\lambda \in (0,\lambda _{1})$ the functional $E_{\lambda }$ has a critical point $u_{\lambda}$ of mountain pass type that satisfies $\Lambda_1\leq E_\lambda(u_\lambda) \leq\Lambda_2$, where $\Lambda_1$ and $\Lambda_2$ do not depend of $\lambda$. 
\end{lemma}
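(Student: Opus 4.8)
The plan is to apply the Mountain Pass Theorem to $E_\lambda$ on $W_0^{1,p(\cdot)}(\Omega)$, using Lemmas \ref{lemma1} and \ref{lemma2} to supply the geometry. Set
\[
\Gamma=\{\gamma\in C([0,1],W_0^{1,p(\cdot)}(\Omega)):\gamma(0)=0,\ \gamma(1)=v\},
\]
where $v$ is the function from Lemma \ref{lemma2}, and define the minimax level
\[
c_\lambda=\inf_{\gamma\in\Gamma}\max_{s\in[0,1]}E_\lambda(\gamma(s)).
\]
Since $E_\lambda(0)=0$ while $\|v\|_{1,p(\cdot)}>1>\delta$ with $E_\lambda(v)<0$, and Lemma \ref{lemma1} forces $E_\lambda\geq\Lambda_1$ on the sphere $\|u\|_{1,p(\cdot)}=\delta$, every path in $\Gamma$ must cross that sphere; hence $c_\lambda\geq\Lambda_1>0$, which already gives the lower bound uniformly in $\lambda$.

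The first genuine step, and the main obstacle, is to verify that $E_\lambda$ satisfies the Palais--Smale condition. Given a sequence $(u_n)$ with $E_\lambda(u_n)$ bounded and $E_\lambda'(u_n)\to 0$, I would first prove boundedness in $W_0^{1,p(\cdot)}(\Omega)$ using $(f4)$: estimating $E_\lambda(u_n)-\tfrac1\theta\langle E_\lambda'(u_n),u_n\rangle$ produces a leading term $\bigl(\tfrac1{p^+}-\tfrac1\theta\bigr)\rho_{p(\cdot)}(\nabla u_n)$ together with lower-order contributions coming from the region $\{0<u_n<t_0\}$ (bounded by continuity of $f$) and from the modified part of $f_\lambda$ on $\{u_n\leq 0\}$ (where $f_\lambda$ is bounded), all uniformly controlled; since $\theta>p^+$ this forces $\rho_{p(\cdot)}(\nabla u_n)$, hence $\|u_n\|_{1,p(\cdot)}$ by Lemma \ref{norma-modular}, to remain bounded. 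Passing to a subsequence, $u_n\rightharpoonup u$ weakly in $W_0^{1,p(\cdot)}(\Omega)$, and by the compact embedding into $L^{r(\cdot)}(\Omega)$ and $L^{q(\cdot)}(\Omega)$ (subcritical by $(f2)$--$(f3)$) the convergence is strong there, so $\int_\Omega f_\lambda(u_n)(u_n-u)\,dx\to 0$. Feeding this into $\langle E_\lambda'(u_n),u_n-u\rangle\to 0$ gives
\[
\int_\Omega|\nabla u_n|^{p(x)-2}\langle\nabla u_n,\nabla(u_n-u)\rangle\,dx\to 0,
\]
and the $(S_+)$ property of the $p(\cdot)$-Laplacian upgrades weak to strong convergence, $u_n\to u$ in $W_0^{1,p(\cdot)}(\Omega)$. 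Thus $(PS)$ holds at every level.

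With geometry and compactness in hand, the Mountain Pass Theorem yields a critical point $u_\lambda$ with $E_\lambda(u_\lambda)=c_\lambda\geq\Lambda_1$. It remains to produce the uniform upper bound $c_\lambda\leq\Lambda_2$. For this I would use not an arbitrary path but the explicit segment $s\mapsto sv$, $s\in[0,1]$, so that $c_\lambda\leq\max_{s\in[0,1]}E_\lambda(sv)$. Writing $v=t_1\varphi$ and reusing the computation of Lemma \ref{lemma2}, the right-hand side is dominated by the finite maximum of the one-variable function
\[
t\mapsto\frac{t^{p_+}}{p_-}-At^\theta\|\varphi\|_\theta^\theta-B|\Omega|+t\lambda_1\|\varphi\|_1,
\]
for $t\geq 1$, with the trivially bounded piece on $[0,1]$. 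The decisive point is that I bound the $\lambda$-dependent term by replacing $\lambda$ with $\lambda_1$, so the resulting maximum $\Lambda_2$ depends only on $p(\cdot),\theta,f,\varphi,\Omega$ and $\lambda_1$, not on the particular $\lambda\in(0,\lambda_1)$; since $\theta>p^+$ this maximum is finite. We conclude $\Lambda_1\leq E_\lambda(u_\lambda)\leq\Lambda_2$ with both bounds independent of $\lambda$, as claimed.
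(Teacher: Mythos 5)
Your proposal is correct and follows essentially the same route as the paper: mountain pass geometry from Lemmas \ref{lemma1} and \ref{lemma2}, boundedness of Palais--Smale sequences via the Ambrosetti--Rabinowitz condition extended to $f_\lambda$ (with the modified region contributing only a uniform constant $M$), compactness from the subcritical growth $(f3)$ and the Sobolev embedding, and the uniform upper bound $\Lambda_2$ obtained by evaluating $E_\lambda$ along the explicit path $s\mapsto s t_1\varphi$ with $\lambda$ replaced by $\lambda_1$. The only deviation is in the last step of the Palais--Smale verification: you invoke the $(S_+)$ property of the $p(\cdot)$-Laplacian against the weak limit, while the paper shows $\{u_n\}$ is Cauchy in $W_0^{1,p(\cdot)}(\Omega)$ directly from the pointwise monotonicity inequality (available here since $p(x)\geq 2$) together with Theorem \ref{topo}; both closings are standard and valid.
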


\begin{proof}
	We show that $E_{\lambda }$  satisfies the Palais–Smale condition. 	
	Assume that $\{u_{n}\}_{n\in \NN}$ is a sequence in $W_{0}^{1,p(\cdot )}(\Omega)$ such that $\{E_{\lambda }(u_{n})\}$ is bounded and $E_{\lambda }^{\prime }(u_{n})\rightarrow 0$. 
	
 First we proved that the sequence $\{u_{n}\}_{n\in \NN}$ is bounded in $W_{0}^{1,p(\cdot )}$. As $E_{\lambda }^{\prime }(u_{n})\rightarrow 0$ there exist $m>0$ such that 
	\[
	\left\vert \left\langle E_{\lambda }^{\prime }(u_{n}),u_{n}\right\rangle \right\vert
	\leq \left\Vert 
	u_{n}\right\Vert _{1,p(\cdot )}\text{,}
	\]
	for	 $n>m.$ Then, 
	\begin{eqnarray*}
		\left\Vert u_{n}\right\Vert _{1,p(\cdot )} &\geq &-\left\langle
		E_{\lambda }^{\prime }(u_{n}),u_{n}\right\rangle  \\
		%&=&-\int_{\Omega }\left\vert \nabla u_{n}\right\vert ^{p(x)-2}\left\langle\nabla u_{n},\nabla u_{n}\right\rangle~dx+\int_{\Omega}f_\lambda(u_{n})u_{n}~dx \\
		&=&-\int_{\Omega }\left\vert \nabla u_{n}\right\vert ^{p(x)}dx+\int_{\Omega } f_\lambda(u_{n})u_{n}~dx\text{.}
	\end{eqnarray*}
Therefore
	\begin{align}\label{3.1}
	-\left\Vert u_{n}\right\Vert _{1,p(\cdot )}-\int_{\Omega }\left\vert \nabla u_{n}\right\vert ^{p(x)}dx\leq - \int_\Omega f_\lambda(u_{n})u_{n}~dx.
	\end{align}

	Let $C_1$ be a constant such that $\left\vert E_{\lambda }(u_{n})\right\vert \leq C_1$
	for all $n$. From $(f4)$ the function $f_\lambda$ satisfy the next Ambrosetti-Rabinowitz condition
 $$
 \theta F_\lambda(t)\leq f_\lambda(t)t+M \qquad \text{for all}  \quad t\in\RR, 
 $$
%{\color{red} for all $t>0$
%\begin{align*}
 %   \theta F_\lambda(t)&=\theta F(t) -\theta \lambda t\\
 %   &\leq (f(t)-\lambda)t +(1-\theta)\lambda t\\
%    &\leq f_\lambda(t)t,
%\end{align*}
%for all $-1<t<0$
%\begin{align*}
%\theta F_\lambda(t)&=\theta\int_0^t -\lambda(s+1)~ds\\
%&=-\theta\lambda(\frac{1}{2}t+1) t\\
%&= -\lambda(t+1)t+\lambda(t+1)t-\theta\lambda(\frac{1}{2}t+1) t\\
%&=f_\lambda(t) +(1-\frac{\theta}{2})t^2+(1-\theta)\lambda t\\
%&\leq f_\lambda(t) + (\theta-1)\lambda_1 
%\end{align*}
%for all $t<-1$
%\begin{align*}
%\theta F_\lambda(t)&=\theta\int_0^{-1} -\lambda(s+1)~ds\\
%&=f_\lambda(t)+ \theta \frac{\lambda}{2} \\
%&\leq f_\lambda(t)+ \theta \frac{\lambda_1}{2} 
%\end{align*}
%}
where %the $\theta>p^+$ and 
$\theta$ and $M$ are
independent of $\lambda\in(0,\lambda_1)$.
  Then,  we have
%	\begin{align*}
%		&\frac{1}{p_{+}}\int_{\Omega }\left\vert 
%         \nabla u_{n}\right\vert ^{p(x)}dx-\frac{1}{\theta }\int_{\Omega   }f_\lambda(u_{n})u_{n}~dx \\
%             		&\qquad\leq \int_{\Omega }\frac{\left\vert \nabla u_{n}\right\vert ^{p(x)}}{p(x)}dx- \int_{\Omega }F_\lambda(u_{n})dx \\
%		&\qquad\leq C,
%	\end{align*}

	\begin{align}\label{3.2}
	&\frac{1}{p_{+}}\int_{\Omega }\left\vert \nabla u_{n}\right\vert ^{p(x)}dx-
	\frac{1}{\theta }\int_{\Omega }f_\lambda(u_{n})u_{n}~dx\\ \nonumber
 &\qquad\leq \int_{\Omega }\frac{\left\vert \nabla u_{n}\right\vert ^{p(x)}}{p(x)}dx- \int_{\Omega }F_\lambda(u_{n})dx+\frac{1}{
		\theta }M\left\vert \Omega \right\vert\\ \nonumber
 &\qquad \leq C_1+\frac{1}{
		\theta }M\left\vert \Omega \right\vert. \nonumber 
	\end{align}

	Therefore, by \eqref{3.1} and \eqref{3.2} we get 
\begin{align*}
	\left(\frac{1}{p_{+}}-\frac1{\theta}\right)\int_{\Omega }\left\vert \nabla u_{n}\right\vert ^{p(x)}dx-\frac1{\theta}\left\Vert u_{n}\right\Vert _{1,p(\cdot )}\leq C_1+\frac{1}{\theta }M\left\vert \Omega \right\vert, 
\end{align*}
we suppose $\left\Vert u_{n}\right\Vert _{1,p(\cdot )}>1$, by Lemma \ref{norma-modular},
\begin{align}\label{3.3}
\left(\frac{1}{p_{+}}-\frac1{\theta}\right)\left\Vert u_{n}\right\Vert _{1,p(\cdot )}^{p-}-\frac1{\theta}\left\Vert u_{n}\right\Vert _{1,p(\cdot )}\leq C_1-\frac{M }{\theta }\left\vert \Omega \right\vert, 	
\end{align}	
	
	The equation \eqref{3.3} is a bounded "polynomial" of degree $p^-$ in $\left\Vert  u_{n}\right\Vert
	_{1,p(\cdot )}$, then $\left\Vert u_{n}\right\Vert_{1,p(\cdot )}$ is bounded for all $n\in\NN$.  Let $C_2>0$ be a constant such that $\left\Vert u_{n}\right\Vert_{1,p(\cdot )}\leq C_2$.

%Let $\tilde C$ be a constant such that $\| u_{n}\|_{1,p(\cdot)} \leq \tilde C$. 

%Since $\{u_n\}$ is bounded in $W^{1,p(\cdot)}(\Omega)$, 
Without loss of generality, we
may assume that $\{u_n\}$ converges weakly in $W_0^{1,p(\cdot)}(\Omega)$. Since $q(\cdot)< p^*(\cdot)$,  by Remark \ref{p-regularity} and Theorem \ref{se} we can assume that $\{u_n\}$ converges $L^{q(\cdot)}(\Omega)$. 

By $(f3)$ there exist positive constants $K_1$ and $K_2$ such that
$$
|f_\lambda(u_n)|\leq K_1|u_n|^{q(\cdot)-1}+K_2,
$$
where $K_2$ depends on $\lambda_1$,
then   $\{f_\lambda(u_n)\}$ is bounded in $L^{q'(\cdot)}(\Omega)$. Therefore, there exists a constant $C_3>0$ such that $\left\| f_\lambda(u_n)\right\|_{q'(\cdot)} \leq C_3$, for all $n$.

These assumptions and H\"older's inequality Lemma \ref{HI}, imply
\begin{align}\label{3.10}
\nonumber	 \int_{\Omega}\left|  f_\lambda(u_n)(u_n-u_m)\right| \,dx&\leq	 \left\| f_\lambda(u_n)\right\|_{q'(\cdot)}\left\| u_n-u_m\right\|_{q(\cdot)}\\  
	& \leq C_3  \left\| u_n-u_m\right\|_{q(\cdot)}\xrightarrow[n,m \to \infty]{}   0.
\end{align}

%We divide the domain $\Omega$ in two parts:
%$$
%\Omega_1=\{x\in\Omega: p(x)<2\} \qquad \text{ and } \qquad \Omega_2=\{x\in\Omega: p(x)\geq2\}.
%$$	
 From \eqref{der} and \eqref{3.10} it is easy to get
\begin{align}\label{3.4}
\nonumber	&\int_{\Omega}\left\langle |\nabla u_n|^{p(x)-2}\nabla u_n-|\nabla u_m|^{p(x)-2}\nabla u_m, \nabla u_n-\nabla u_m \right\rangle \,dx\\
\nonumber	&\qquad\leq \left| \left\langle E'_\lambda(u_n),u_n-u_m \right\rangle\right|  +\left| \int_{\Omega} f_\lambda(u_n)(u_n-u_m)\,dx\right| \\
\nonumber	&\qquad\qquad+ \left| \left\langle E'_\lambda(u_m),u_n-u_m \right\rangle\right|  +\left| \int_{\Omega} f_\lambda(u_m)(u_n-u_m)\,dx\right|\\
\nonumber	& \qquad\leq 2 C_2\left(  \|E'_\lambda(u_m)\|_{-1,p'(\cdot)}+\|E'_\lambda(u_n)\|_{-1,p'(\cdot)}\right)\\
	 &\qquad\qquad+ \left| \int_{\Omega} f_\lambda(u_n)(u_n-u_m)\,dx\right|+\left| \int_{\Omega} f_\lambda(u_m)(u_n-u_m)\,dx\right| \xrightarrow[n,m \to \infty]{}   0.
\end{align}

So, by \eqref{3.4}, we have
\begin{align}\label{3.8}
\nonumber &\int_{\Omega}|\nabla u_n-\nabla u_m|^{p(x)}\,dx\\
 &\qquad \leq C \int_{\Omega}\left\langle |\nabla u_n|^{p(x)-2}\nabla u_n-|\nabla u_m|^{p(x)-2}\nabla u_m, \nabla u_n-\nabla u_m \right\rangle \,dx\xrightarrow[n,m \to \infty]{}   0.
\end{align}
Therefore, by Theorem \ref{topo} and \eqref{3.8}, we get 
$$
\|\nabla u_n-\nabla u_m\|_{p(\cdot)}\xrightarrow[n,m \to \infty]{}   0,
$$
then $\{u_n\}$ is a Cauchy sequence in $W^{1,p(\cdot)}_0(\Omega)$. This
proves that $E_\lambda$ satisfies the Palais–Smale condition. 
So, by Lemmas \ref{lemma1} and \ref{lemma2}, we can apply the mountain pass theorem (see \cite{W}), the functional $E_{\lambda}$ has a nontrivial critical point $u_{\lambda}\in W_0^{1,p(\cdot)}(\Omega)$. Furthermore, this critical point
is characterized by
\begin{align*}
E_\lambda(u_\lambda)=\min_{\gamma\in\Gamma}\max_{t\in[0,1]}E(\gamma(t)),
\end{align*}
where $\Gamma$ is the set of continuous pathways $\gamma:[0,1]\to W_0^{1,p(\cdot)}(\Omega)$, with $\gamma(0)=0$ and $\gamma(1)=v$. Moreover, from equations \eqref{eq1} and \eqref{eq2} we get
\begin{align}\label{eq3}
    \Lambda_1 \leq E_\lambda(u_\lambda) \leq \frac{t_1^{p_{+}}}{p_{-}}+t\lambda_1\left\Vert \varphi
\right\Vert _{1}:=\Lambda_2.
\end{align}
\end{proof}
\begin{remark}\label{r1}
The solutions $u_\lambda$, with $\lambda\in (0,\lambda_1)$ are uniform bounded in $W_0^{1,p(\cdot)}(\Omega)$. 
\end{remark}
In fact, since $u_\lambda$ is a critical point of $E_\lambda$, then
$$\int_{\Omega}|\nabla u_\lambda|^{p(x)}\,dx=\int_\Omega f_\lambda(u_\lambda)u_\lambda\,dx.$$
From the Ambrosetti-Rabinowitz condition and Lemma \ref{sol}
\begin{align*}
    \left(\frac{1}{p^+}-\frac{1}{\theta}\right)\int_{\Omega}|\nabla u_\lambda|^{p(x)}\,dx&=\frac{1}{p^+}\int_{\Omega}|\nabla u_\lambda|^{p(x)}\,dx-\frac1{\theta} \int_\Omega f_\lambda(u_\lambda)u_\lambda\,dx\\
    &\leq \int_{\Omega}\frac{|\nabla u_\lambda|^{p(x)}}{p(x)}\,dx-\int_\Omega F_\lambda(u_\lambda)+\frac{M}{\theta}|\Omega|\\
    &=E_\lambda(u_\lambda)+\frac{M}{\theta}|\Omega|\leq \Lambda_2+\frac{M}{\theta}|\Omega|,
\end{align*}
then there exist a constant $C_4>0$, that don not depend of $\lambda$, such that
$$\|u_\lambda\|_{1,p(\cdot)}\leq C_4.$$
\begin{remark}\label{r2}
The solutions $u_\lambda$, with $\lambda\in (0,\lambda_1)$ are uniform bounded in $L^\infty(\Omega)$. 
\end{remark}
In fact,    the function $f$ satisfy the sub-critical growth condition, equation \eqref{2},   by item $(a)$ in Theorem \ref{reg} and since $u_\lambda$ are uniform bounded in $W_0^{1,p(\cdot)}(\Omega)$, we get $u_\lambda$ are uniform bounded in $L^\infty(\Omega)$. 

\begin{remark}\label{r3}
By Remark \ref{r1}, Remark \ref{r2} and item $(b)$ in Theorem \ref{reg}, we have $u_\lambda$, with $\lambda\in (0,\lambda_1)$, are uniform bounded in $C^{1,\alpha}(\overline{\Omega})$, for some $\alpha\in(0,1)$.
\end{remark}

%\[
%c^{p_{+}-1}\Delta _{p(\cdot )}(u)\leq \Delta _{p(\cdot )}(cu)\leq
%c^{p_{-}-1}\Delta _{p(\cdot )}(u)\text{.} 
%\]

\begin{proof}[Proof of the Theorem \ref{principalTheorem}] We now prove theorem \ref{principalTheorem} by contradiction. Suppose there exists a
sequence $\{\lambda _{j}\}_{j}$, $\lambda_1>\lambda _{j}>\lambda_{j+1}>0$, for all $j$, such that $\lambda_j$  converging to $0$ and  the measure $m\left( \{x\in \Omega :u_{\lambda _{j}}(x)\leq 0\}\right)
>0$. Let $u_{j}=u_{\lambda _{j}}$.
%We know that the sequence $\{u_{j}\}_{j}$ is bounded on $W^{1,p(\cdot )}(\Omega )$.
Then, by %Remark \ref{r1} and
Remark \ref{r3}
%theorem 1.2 in \cite{F1}, 
the sequence $\{u_{j}\}_{j}$ is uniformly bounded %in $W^{1,p(\cdot)}(\Omega)$ and also 
in $C^{1,\alpha }(\overline{\Omega})$ for some $\alpha \in (0,1)$. Hence, for any $\beta \in
(0,\alpha )$, the sequence $\{u_{j}\}_{j}$ has a subsequence that converges
in $C^{1,\beta }(\overline{\Omega})$ and weakly converges in $W^{1,p(\cdot)}_0(\Omega)$. Let us denote its limit by $u$. Next we prove that $%
u(x)\geq 0$.

%Let $v_{0}\in W_{0}^{1,p(\cdot )}(\Omega )$ be a solution of%
%\begin{equation*}\label{0.1}
%	\left\{ 
%	\begin{array}{rclc}
%	-\Delta _{p(\cdot )}(v_{0})&=&-1,\qquad&\text{in }\Omega \\ 
%v_{0}&=&0,&\text{on }\partial \Omega%
%	\end{array}%
%	\right.,  
%\end{equation*}

Let $v_{j}\in W_{0}^{1,p(\cdot )}(\Omega )$ be a solution of%
\begin{equation*}
	\left\{ 
	\begin{array}{rclc}
	-\Delta _{p(\cdot )}(v_{j})&=&-\lambda_j,\qquad&\text{in }\Omega \\ 
v_{j}&=&0,&\text{on }\partial \Omega%
	\end{array}%
	\right.,  
\end{equation*}
by Theorem \ref{com}, we get $v_j\leq v_{j+1}< 0$, for all $j$. Furthermore if $\phi \in W^{1,p(\cdot)}(\Omega)$, we get
\begin{align*}
    \int_{\Omega }\left\vert
\nabla u_j\right\vert ^{p(x)-2}\left\langle \nabla u_j,\nabla \phi\right\rangle dx&=\int_\Omega(f(u_j)-\lambda_j)\phi dx\\
&>-\int_\Omega\lambda_j\phi dx=\int_{\Omega }\left\vert
\nabla v_j\right\vert^{p(x)-2}\left\langle \nabla v_j,\nabla \phi\right\rangle dx,
\end{align*}
then $u_j\geq v_j$. So the fact that $v_{j}\rightarrow 0$ as $j\rightarrow 0$ implies that $u(x)\geq 0$ for all 
$x\in \Omega $.

%Let $k_{j}=-\lambda _{j}$. And let $v_{j}$ be subsolution of

%\[
%\left\{ 
%\begin{array}{c}
%-\Delta _{p(\cdot )}(v_{j})\leq k_{j}\text{, in }\Omega \\ 
%v_{j}=0\text{ on }\partial \Omega%
%\end{array}%
%\right. \text{.} 
%\]

%Then $v_{j}=(-k_{j})^{\frac{1}{p_{+}-1}}v_{0}$. Indeed,

%\[
%-\Delta _{p(\cdot )}(v_{j})=-\Delta _{p(\cdot )}\left( (-k_{j})^{\frac{1}{%
%p_{+}-1}}v_{0}\right) \leq \left( (-k_{j})^{\frac{1}{p_{+}-1}}\right)
%^{p_{+}-1}\left[ -\Delta _{p(\cdot )}(v_{0})\right] =-k_{j}\text{.} 
%\]

%Since $f(u_{j})-\lambda _{j}\geq k_{j}$, then for the Comparision principle in \cite{F}, we have $u_{j}\geq v_{j}$. So the fact that v_{j}(x)\rightarrow 0$ as $j\rightarrow 0$ implies that $u(x)\geq 0$ for all $x\in \Omega $.

Since $\{f(u_{j})\}_{j}$ is bounded on $L^{q^{\prime }(\cdot )}(\Omega )$,
then $f(u_{j})$ converges weakly. Let $z\in L^{q^{\prime }(\cdot )}(\Omega )$
be the weak limit of such a sequence. We have $z\geq 0$ and if $\phi \in
C_{c}^{\infty }(\Omega )$, then%
\begin{eqnarray*}
\int_{\Omega }\left\vert \nabla u\right\vert ^{p(x)-2}\left\langle \nabla
u,\nabla \phi \right\rangle dx &=&\lim_{j\rightarrow \infty }\int_{\Omega
}\left\vert \nabla u_{j}\right\vert ^{p(x)-2}\left\langle \nabla
u_{j},\nabla \phi \right\rangle dx \\
&=&\lim_{j\rightarrow \infty }~\int_{\Omega }(f(u_{j})-\lambda_j)\phi dx \\
&=&\int_{\Omega }z\phi \text{.}
\end{eqnarray*}

Therefore, $-\Delta _{p(\cdot )}(u)=z$. By Theorems 1.1 and 1.2 in \cite{Z} $u>0$ in $\Omega $ and,
\[
\frac{\partial u}{\partial \gamma }(x)>0\text{, for all }x\in \partial
\Omega \text{,}
\]
where $\gamma $ is the inward unit normal vector of $\partial \Omega $ on $x$. Therefore, since $\{u_{j}\}_{j}$ converges in $C^{1,\beta }(\overline{\Omega})$ to $u$, for $j$ sufficiently large, $u_{j}(x)>0$ for all $x\in \Omega $. But this contradicts the assumption that 
\[
m\left( \{x\in \Omega :u_{\lambda _{j}}(x)\leq 0\}\right) >0\text{.}
\]
This contradiction proves theorem (\ref{principalTheorem}).
\end{proof}
%{\color{red} ver un poco esta ultima frase}

\end{document}